\documentclass{amsart}
%%%%%%%%%%%%%%%%%%%%%%%%%%%%%%%%%%%%%%%%%%%%%%%%%%%%%%%%%%%%%%%%%%%%%%%%%%%%%%%%%%%%%%%%%%%%%%%%%%%%%%%%%%%%%%%%%%%%%%%%%%%%%%%%%%%%%%%%%%%%%%%%%%%%%%%%%%%%%%%%%%%%%%%%%%%%%%%%%%%%%%%%%%%%%%%%%%%%%%%%%%%%%%%%%%%%%%%%%%%%%%%%%%%%%%%%%%%%%%%%%%%%%%%%%%%%
\usepackage{amsfonts}

\setcounter{MaxMatrixCols}{10}
%TCIDATA{OutputFilter=LATEX.DLL}
%TCIDATA{Version=5.50.0.2953}
%TCIDATA{<META NAME="SaveForMode" CONTENT="1">}
%TCIDATA{BibliographyScheme=Manual}
%TCIDATA{Created=Monday, October 08, 2012 12:42:13}
%TCIDATA{LastRevised=Thursday, December 06, 2012 14:37:12}
%TCIDATA{<META NAME="GraphicsSave" CONTENT="32">}
%TCIDATA{<META NAME="DocumentShell" CONTENT="Articles\SW\AMS Journal Article">}
%TCIDATA{CSTFile=amsartci.cst}

\newtheorem{theorem}{Theorem}
\theoremstyle{plain}

\newtheorem{corollary}{Corollary}

\newtheorem{definition}{Definition}

\numberwithin{equation}{section}
\input{tcilatex}

\begin{document}
\title[inequalities for $s$-logarithmically convex functions]{On some
integral inequalities for $s$-logarithmically convex functions and their
applications}
\author{Ahmet Ocak AKDEM\.{I}R$^{1}$}
\address{$^{1}$Department of Mathematics, Faculty of Science and Arts,
University of A\u{g}r\i\ \.{I}brahim \c{C}e\c{c}en, 04000, A\u{g}r\i , Turkey%
}
\email{$^{1}$ahmetakdemir@agri.edu.tr}
\author{Mevl\"{u}t TUN\c{C}$^{2}$}
\address{$^{2}$Department of Mathematics, Faculty of Science and Arts,
University of Kilis 7 Aral\i k, 79000, Kilis, Turkey}
\email{$^{2}$mevluttunc@kilis.edu.tr}
\date{October, 2012}
\subjclass[2000]{Primary 26D15}
\keywords{logarithmically convex, $s$-logarithmically convex, Hadamard's
inequality}
\thanks{This paper is in final form and no version of it will be submitted
for publication elsewhere.\\
$^{2}$Corresponding Author}

\begin{abstract}
In this paper, we describe $s$-logarithmically convex functions in the first
and second sense which are connected with the ordinary logatihmic convex and 
$s$-convex in the first and second sense. Afterwards, some new inequalities
related to above new definitions are given.
\end{abstract}

\maketitle

\section{Intoruction}

In this section we will present definitions and some results used in this
paper. In what follows, $I$ will be used to denote an interval of real
numbers.

\begin{definition}
Let $I$ be an interval in $%
%TCIMACRO{\U{211d} }%
%BeginExpansion
\mathbb{R}
%EndExpansion
.$ Then $f:I\rightarrow 
%TCIMACRO{\U{211d} }%
%BeginExpansion
\mathbb{R}
%EndExpansion
,$ $\emptyset \neq I\subseteq 
%TCIMACRO{\U{211d} }%
%BeginExpansion
\mathbb{R}
%EndExpansion
$\ is said to be convex if 
\begin{equation}
f\left( tx+\left( 1-t\right) y\right) \leq tf\left( x\right) +\left(
1-t\right) f\left( y\right) .  \label{1}
\end{equation}%
for all $x,y\in I$ and $t\in \left[ 0,1\right] .$
\end{definition}

The following concept was introduced by Ozlicz in the paper \cite{orl} and
was used in the theory of Ozlicz spaces:

\begin{definition}
\cite{orl} Let $0<s\leq 1$. A function $f:%
%TCIMACRO{\U{211d} }%
%BeginExpansion
\mathbb{R}
%EndExpansion
_{+}\rightarrow 
%TCIMACRO{\U{211d} }%
%BeginExpansion
\mathbb{R}
%EndExpansion
$\ where $%
%TCIMACRO{\U{211d} }%
%BeginExpansion
\mathbb{R}
%EndExpansion
_{+}:=\left[ 0,\infty \right) ,$\ is said to be $s$-convex in the first
sense if%
\begin{equation}
f\left( \alpha u+\beta v\right) \leq \alpha ^{s}f\left( u\right) +\beta
^{s}f\left( v\right) 
\end{equation}%
for all $u,v\in 
%TCIMACRO{\U{211d} }%
%BeginExpansion
\mathbb{R}
%EndExpansion
_{+}$\ and $\alpha ,\beta \geq 0$ with $\alpha ^{s}+\beta ^{s}=1.$\ It is
denoted this by $f\in K_{s}^{1}.$
\end{definition}

In the paper \cite{hud}, H. Hudzik and L. Maligranda considered, among
others, the following class of functions:

\begin{definition}
\cite{hud}\ A function $f:%
%TCIMACRO{\U{211d} }%
%BeginExpansion
\mathbb{R}
%EndExpansion
_{+}\rightarrow 
%TCIMACRO{\U{211d} }%
%BeginExpansion
\mathbb{R}
%EndExpansion
$\ is said to be $s$-convex in the second sense if \ \ \ \ \ \ \ \ \ \ \ \ 
\begin{equation}
f\left( \alpha u+\beta v\right) \leq \alpha ^{s}f\left( u\right) +\beta
^{s}f\left( v\right)   \label{2}
\end{equation}%
for all $u,v\geq 0$\ and $\alpha ,\beta \geq 0$ with $\alpha +\beta =1$ and $%
s$ fixed in $\left( 0,1\right] $. They denoted this by $f\in K_{s}^{2}.$
\end{definition}

It can be easily checked for $s=1$, $s$-convexity reduces to the ordinary
convexity of functions defined on $\left[ 0,\infty \right) $.\ \ \ \ \ \ \ \
\ \ \ 

\begin{definition}
\cite{2}A function $f:I\rightarrow \left[ 0,\infty \right) $ is said to be
log-convex or multiplicatively convex if $\log f$ is convex, or
equivalently, if for all $x,y\in I$ and $t\in \left[ 0,1\right] ,$ one has
the inequality:%
\begin{equation}
f\left( tx+\left( 1-t\right) y\right) \leq \left[ f\left( x\right) \right]
^{t}\left[ f\left( y\right) \right] ^{1-t}.  \label{1.2}
\end{equation}
\end{definition}

We note that if $f$ and $g$ are convex functions and $g$ is monotonic
nondecreasing, then $gof$ is convex. Moreover, since $f=exp(\log f)$, it
follows that a log-convex function is convex, but the converse is not true [%
\ref{2}, p. 7]. This fact is obvious from (\ref{1.2}) as by the
arithmetic-geometric mean inequality, we have%
\begin{equation}
\left[ f\left( x\right) \right] ^{t}\left[ f\left( y\right) \right]
^{1-t}\leq tf\left( x\right) +\left( 1-t\right) f\left( y\right) 
\label{1.3}
\end{equation}%
for all $x,y\in I$ and $t\in \left[ 0,1\right] $.

If the above inequality (\ref{1.2}) is reversed, then $f$ is called
logarithmically concave, or simply $log$-concave. Apparently, it would seem
that $log$-concave ($log$-convex) functions would be unremarkable because
they are simply related to concave (convex) functions. But they have some
surprising properties. It is well known that the product of $log$-concave ($%
log$-convex) functions is also $log$-concave ($log$-convex). Moreover, the
sum of $log$-convex functions is also $log$-convex, and a convergent
sequence of $log$-convex ($log$-concave) functions has a $log$-convex ($log$%
-concave) limit function provided the limit is positive. However, the sum of 
$log$-concave functions is not necessarily $log$-concave. Due to their
interesting properties, the $log$-convex ($log$-concave) functions
frequently appear in many problems of classical analysis and probability
theory.

The next inequality is well known in the literature as the Hermite-Hadamard
inequality for convex functions%
\begin{equation}
f\left( \frac{a+b}{2}\right) \leq \frac{1}{b-a}\int_{a}^{b}f\left( x\right)
dx\leq \frac{f\left( a\right) +f\left( b\right) }{2}  \label{1.4}
\end{equation}%
where $f:I\rightarrow 
%TCIMACRO{\U{211d} }%
%BeginExpansion
\mathbb{R}
%EndExpansion
$ is a convex function on the interval $I$ of real numbers $a,b\in I$ with $%
a<b.$

For some recent results related to this classic result, see the papers \cite%
{hud}-\cite{10} and the books \cite{2}-\cite{dr2} where further references
are given.

In \cite{8}, S.S. Dragomir and B. Mond proved that the following
inequalities of Hermite-Hadamard type hold for $log$-convex functions:

\begin{theorem}
Let $f:I\rightarrow \left[ 0,\infty \right) $ be a log-convex mapping on $I$
and $a,b\in I$ with $a<b$. Then one has the inequality:%
\begin{equation}
f\left( A\left( a,b\right) \right) \leq \frac{1}{b-a}\int_{a}^{b}G\left(
f\left( x\right) ,f\left( a+b-x\right) \right) dx\leq G\left( f\left(
a\right) ,f\left( b\right) \right) .  \label{dr1}
\end{equation}
\end{theorem}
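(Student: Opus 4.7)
The plan is to exploit the symmetry already built into the integrand $G(f(x),f(a+b-x))$ and apply the log-convex defining inequality (\ref{1.2}) twice: once with weight $t=1/2$ at the midpoint for the left bound, and once with the affine parametrization $x=ta+(1-t)b$ for the right bound.

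For the left inequality, the observation is that $\tfrac{a+b}{2}$ is the midpoint of $x$ and $a+b-x$ for every $x\in[a,b]$. So I would apply (\ref{1.2}) with $t=1/2$ to the pair $(x,\,a+b-x)$ to obtain
\begin{equation*}
f\!\left(\tfrac{a+b}{2}\right)=f\!\left(\tfrac{x+(a+b-x)}{2}\right)\le [f(x)]^{1/2}[f(a+b-x)]^{1/2}=G\bigl(f(x),f(a+b-x)\bigr).
\end{equation*}
Since this pointwise bound is uniform in $x$, integrating over $[a,b]$ and dividing by $b-a$ yields the left-hand inequality in (\ref{dr1}).

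For the right inequality, I would write $x=ta+(1-t)b$ so that $a+b-x=(1-t)a+tb$, and apply (\ref{1.2}) to each factor separately:
\begin{equation*}
f(x)\le [f(a)]^{t}[f(b)]^{1-t},\qquad f(a+b-x)\le [f(a)]^{1-t}[f(b)]^{t}.
\end{equation*}
Multiplying these two bounds causes the exponents of $f(a)$ and of $f(b)$ to sum to $1$, giving $f(x)f(a+b-x)\le f(a)f(b)$, hence $G(f(x),f(a+b-x))\le G(f(a),f(b))$ for every $x\in[a,b]$. Integrating this pointwise inequality over $[a,b]$ and dividing by $b-a$ gives the right-hand side of (\ref{dr1}).

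There is no real obstacle here; the only mild subtlety is recognizing that $G(f(x),f(a+b-x))$ is the natural ``geometric symmetrization'' that makes log-convexity apply cleanly at both endpoints of the chain. The left inequality uses the midpoint property of $A(a,b)$, and the right uses that the exponents $t$ and $1-t$ appear in complementary positions in $f(x)$ and $f(a+b-x)$, so multiplication collapses the $t$-dependence entirely. No integration by parts, no auxiliary function, and no monotonicity argument is needed.
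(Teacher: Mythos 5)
Your proof is correct, and it is the standard argument for this result. Note that the paper itself gives no proof of this theorem: it is quoted verbatim from Dragomir and Mond \cite{8} as background, so there is nothing internal to compare against; your two-step argument (midpoint log-convexity pointwise in $x$ for the left bound, and the complementary-exponent cancellation $f(x)f(a+b-x)\leq f(a)f(b)$ for the right bound) is exactly how the cited source establishes (\ref{dr1}).
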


\begin{theorem}
\bigskip Let $f:I\rightarrow \left( 0,\infty \right) $ be a log-convex
mapping on $I$ and $a,b\in I$ with $a<b$. Then one has the inequality:%
\begin{eqnarray}
f\left( \frac{a+b}{2}\right) &\leq &\exp \left[ \frac{1}{b-a}\int_{a}^{b}\ln %
\left[ f\left( x\right) \right] dx\right]  \label{dr2} \\
&\leq &\frac{1}{b-a}\int_{a}^{b}G\left( f\left( x\right) ,f\left(
a+b-x\right) \right) dx\leq \frac{1}{b-a}\int_{a}^{b}f\left( x\right) dx 
\notag \\
&\leq &L\left( f\left( a\right) ,f\left( b\right) \right) \leq \frac{f\left(
a\right) +f\left( b\right) }{2}  \notag
\end{eqnarray}%
where $G(p,q):=\sqrt{pq}$ is the geometric mean and $L(p,q):=\frac{p\text{-}q%
}{\ln p-\ln q}$ $(p\neq q)$ is the logarithmic mean of the strictly positive
real numbers $p,q,$ i.e., 
\begin{equation*}
L\left( p,q\right) =\frac{p-q}{\ln p-\ln q}\text{ }if\text{ }p\neq q\text{
and }L(p,p)=p.
\end{equation*}
\end{theorem}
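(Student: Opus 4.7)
The theorem is a chain of five inequalities, so the plan is to verify each link in turn, drawing on the classical Hermite--Hadamard inequality \eqref{1.4}, Jensen's inequality, the arithmetic--geometric mean inequality, and the defining inequality \eqref{1.2} of log-convexity. The first link, $f\bigl(\tfrac{a+b}{2}\bigr) \leq \exp\bigl[\tfrac{1}{b-a}\int_a^b \ln f(x)\,dx\bigr]$, follows immediately on applying \eqref{1.4} to the convex function $\ln f$ and then exponentiating. For the second link I would exploit the symmetry substitution $x \mapsto a+b-x$, which yields $\int_a^b \ln f(x)\,dx = \int_a^b \ln f(a+b-x)\,dx$, so that the average of $\ln f$ coincides with the average of $\tfrac{1}{2}\ln[f(x)f(a+b-x)] = \ln G(f(x), f(a+b-x))$; Jensen's inequality for the convex function $\exp$ then produces the desired bound.

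The third link is the pointwise arithmetic--geometric mean inequality $G(f(x), f(a+b-x)) \leq \tfrac{1}{2}[f(x)+f(a+b-x)]$, followed by integration and the same symmetry substitution to collapse the right-hand side to $\tfrac{1}{b-a}\int_a^b f(x)\,dx$. For the fourth link I would change variables via $x = ta+(1-t)b$ to write $\tfrac{1}{b-a}\int_a^b f(x)\,dx = \int_0^1 f(ta+(1-t)b)\,dt$; applying \eqref{1.2} with this parametrization bounds the integrand by $f(a)^t f(b)^{1-t}$, and the elementary evaluation
\begin{equation*}
\int_0^1 f(a)^t f(b)^{1-t}\,dt = \frac{f(a)-f(b)}{\ln f(a) - \ln f(b)} = L(f(a), f(b))
\end{equation*}
(with the trivial case $f(a)=f(b)$ handled separately) closes the step.

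The final link $L(p,q) \leq (p+q)/2$ is the classical comparison of logarithmic and arithmetic means of positive reals, and can simply be quoted. No single step is technically deep; the main obstacle is really bookkeeping---keeping the direction of Jensen's inequality correct in each invocation (convex $\ln f$ in the first link versus convex $\exp$ in the second), applying the symmetry substitution legitimately in both the second and third links, and performing the explicit integration that produces the logarithmic mean in the fourth link.
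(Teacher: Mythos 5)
Your argument is correct in every link, but note that the paper does not actually prove this theorem: it is quoted as a known result of Dragomir and Mond \cite{8}, so there is no in-paper proof to compare against. The chain you give --- Hermite--Hadamard \eqref{1.4} applied to the convex function $\ln f$, Jensen's inequality for $\exp$ combined with the symmetry $x\mapsto a+b-x$, the pointwise arithmetic--geometric mean inequality, the substitution $x=ta+(1-t)b$ together with \eqref{1.2} and the evaluation $\int_{0}^{1}\left[ f\left( a\right) \right]^{t}\left[ f\left( b\right) \right]^{1-t}dt=L\left( f\left( a\right) ,f\left( b\right) \right)$, and finally the classical bound $L\left( p,q\right) \leq \frac{p+q}{2}$ --- is the standard derivation and is essentially the one in the cited source.
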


In \cite{10}, B.G. Pachpatte proved that the inequalities hold for two $log$%
-convex functions:%
\begin{eqnarray}
\frac{4}{b-a}\int_{a}^{b}f\left( x\right) g\left( x\right) dx &\leq &\left[
f\left( a\right) +f\left( b\right) \right] L\left( f\left( a\right) ,f\left(
b\right) \right)  \label{1.6} \\
&&+\left[ g\left( a\right) +g\left( b\right) \right] L\left( g\left(
a\right) ,g\left( b\right) \right)  \notag
\end{eqnarray}

Recently, In \cite{bai}, the concept of geometrically and $m$- and $\left(
\alpha ,m\right) $-logarithmically convex functions was introduced as
follows.

\begin{definition}
\label{d}A function $f:[0,b]\rightarrow (0,\infty )$ is said to be $m$%
-logarithmically convex if the inequality%
\begin{equation}
f\left( tx+m\left( 1-t\right) y\right) \leq \left[ f\left( x\right) \right]
^{t}\left[ f\left( y\right) \right] ^{m\left( 1-t\right) }  \label{d1}
\end{equation}%
holds for all $x,y\in \lbrack 0,b]$, $m\in (0,1]$, and $t\in \lbrack 0,1]$.
\end{definition}

Obviously, if putting $m=1$ in Definition \ref{d}, then $f$ is just the
ordinary logarithmically convex function on $\left[ 0,b\right] $.

\begin{definition}
\label{dx}A function $f:[0,b]\rightarrow (0,\infty )$ is said to be $\left(
\alpha ,m\right) $-logarithmically convex if%
\begin{equation}
f\left( tx+m\left( 1-t\right) y\right) \leq \left[ f\left( x\right) \right]
^{t^{\alpha }}\left[ f\left( y\right) \right] ^{m\left( 1-t^{\alpha }\right)
}  \label{d2}
\end{equation}%
holds for all $x,y\in \lbrack 0,b]$, $\left( \alpha ,m\right) \in \left( 0,1%
\right] \times \left( 0,1\right] ,$ and $t\in \lbrack 0,1]$.
\end{definition}

Clearly, when taking $\alpha =1$ in Definition \ref{dx}, then $f$ becomes
the standard $m$-logarithmically convex function on $\left[ 0,b\right] $.

The main purpose of this paper is to introduce the concepts of $s$%
-logarithmically convex in the first and second sense and to establish some
inequalities of Hadamard type for $s$-logarithmically convex functions.

\section{Definitions of $s$-logarithmically convex functions}

Now it is time to introduce two new classes of functions which will be
called $s$-logarithmically convex in the first and second sense.

\begin{definition}
\label{mm}A function $f:I\subset 
%TCIMACRO{\U{211d} }%
%BeginExpansion
\mathbb{R}
%EndExpansion
_{0}\rightarrow 
%TCIMACRO{\U{211d} }%
%BeginExpansion
\mathbb{R}
%EndExpansion
_{+}$\ is said to be $s$-logarithmically convex in the first sense\ if \ \ \
\ \ \ \ \ \ \ \ \ 
\begin{equation}
f\left( \alpha x+\beta y\right) \leq \left[ f\left( x\right) \right]
^{\alpha ^{s}}\left[ f\left( y\right) \right] ^{\beta ^{s}}  \label{m1}
\end{equation}%
for some $s\in \left( 0,1\right] $, where $x,y\in I$\ and $\alpha ^{s}+\beta
^{s}=1.$
\end{definition}

\begin{definition}
\label{mmm}A function $f:I\subset 
%TCIMACRO{\U{211d} }%
%BeginExpansion
\mathbb{R}
%EndExpansion
_{0}\rightarrow 
%TCIMACRO{\U{211d} }%
%BeginExpansion
\mathbb{R}
%EndExpansion
_{+}$\ is said to be $s$-logarithmically convex in the second sense\ if \ \
\ \ \ \ \ \ \ \ \ \ 
\begin{equation}
f\left( tx+\left( 1-t\right) y\right) \leq \left[ f\left( x\right) \right]
^{t^{s}}\left[ f\left( y\right) \right] ^{\left( 1-t\right) ^{s}}  \label{m2}
\end{equation}%
for some $s\in \left( 0,1\right] $, where $x,y\in I$\ and $t\in \left[ 0,1%
\right] $.
\end{definition}

Clearly, when taking $s=1$ in Definition \ref{m1} or Definition \ref{m2},
then $f$ becomes the standard logarithmically convex function on $I$. If the
above inequalities (\ref{m1}) and (\ref{m2}) are reversed, then $f$ is
called $s$-logarithmically concave in the first and second sense,
respctively.

\section{On some inequalities for $s$-logarithmically convex function in the
first sense}

\begin{theorem}
\bigskip Let $f:I\subset 
%TCIMACRO{\U{211d} }%
%BeginExpansion
\mathbb{R}
%EndExpansion
_{0}\rightarrow 
%TCIMACRO{\U{211d} }%
%BeginExpansion
\mathbb{R}
%EndExpansion
_{+}$\ be a $s$-logarithmically convex mapping in the first sense and
monotonic nondecreasing on $I$ with $s\in \left( 0,1\right] .$\ If $a,b\in I$
with $a<b,$ then the following inequality holds:\ \ \ \ \ \ \ \ \ \ \ \ \ 
\begin{equation}
f\left( \frac{a+b}{2^{\frac{1}{s}}}\right) \leq \frac{1}{b-a}%
\int_{a}^{b}G\left( f\left( x\right) ,f\left( a+b-x\right) \right) dx
\label{111}
\end{equation}%
where $G(p,q):=\sqrt{pq}$ is geometric mean of the strictly positive real
numbers $p,q$.
\end{theorem}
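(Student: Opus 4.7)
The plan is to apply Definition~\ref{mm} pointwise with a cleverly chosen symmetric pair of weights and then integrate. Specifically, I would take $\alpha=\beta=2^{-1/s}$, so that
\[
\alpha^{s}+\beta^{s}=\tfrac{1}{2}+\tfrac{1}{2}=1,
\]
which is exactly the admissibility condition in the first-sense definition. This choice is forced on us by the shape of the left-hand side of (\ref{111}): for any $x\in[a,b]$ we then have
\[
\alpha x+\beta(a+b-x)=2^{-1/s}(a+b)=\frac{a+b}{2^{1/s}},
\]
which is the argument appearing inside $f$ in (\ref{111}).

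Next I would verify that the required points lie in $I$. Since $x\in[a,b]\subseteq I$ implies $a+b-x\in[a,b]\subseteq I$, Definition~\ref{mm} applied to the pair $(x,\,a+b-x)$ with the weights above gives
\[
f\!\left(\frac{a+b}{2^{1/s}}\right)\leq [f(x)]^{\alpha^{s}}[f(a+b-x)]^{\beta^{s}}=[f(x)]^{1/2}[f(a+b-x)]^{1/2}=G\bigl(f(x),f(a+b-x)\bigr)
\]
for every $x\in[a,b]$.

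The last step is to integrate this pointwise inequality in $x$ from $a$ to $b$ and divide by $b-a$. The left-hand side is independent of $x$, so it simply passes through the averaging operation, yielding exactly (\ref{111}).

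There is essentially no hard step here: once one spots the symmetric weight choice $\alpha=\beta=2^{-1/s}$ (which is the unique way to hit the constant $1/2^{1/s}$ while keeping $\alpha^{s}+\beta^{s}=1$), the rest is a one-line application of Definition~\ref{mm} followed by averaging. I note in passing that the monotonic nondecreasing hypothesis on $f$ does not seem to enter this argument at all; it appears to be included either for consistency with subsequent theorems or to ensure that the point $(a+b)/2^{1/s}$ (which for $s<1$ lies strictly below $(a+b)/2$) is meaningfully comparable to values of $f$ on $[a,b]$.
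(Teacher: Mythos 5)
Your proof is correct and is essentially the paper's own argument: the authors also take $\alpha=\beta=2^{-1/s}$ in Definition~\ref{mm} and apply it to the symmetric pair, merely writing it as $x=ta+(1-t)b$, $y=tb+(1-t)a$ and integrating over $t\in[0,1]$, which is your integration over $x\in[a,b]$ after a linear change of variable. Your side remark is also accurate — the monotonicity hypothesis is not used in the paper's proof either.
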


\begin{proof}
\bigskip If we choose in the definition of $s$-logarithmically convex
mapping in the first sense $\alpha =\frac{1}{2^{\frac{1}{s}}},$ $\beta =%
\frac{1}{2^{\frac{1}{s}}},$ we have that $\alpha ^{s}+\beta ^{s}=1$ and then
for all $x,y\in I$%
\begin{equation*}
f\left( \frac{x+y}{2^{\frac{1}{s}}}\right) \leq G\left( f\left( x\right)
,f\left( y\right) \right) .
\end{equation*}%
If we choose $x=ta+\left( 1-t\right) b,$ $y=tb+\left( 1-t\right) a,$ $t\in %
\left[ 0,1\right] ,$ we obtain%
\begin{equation*}
f\left( \frac{a+b}{2^{\frac{1}{s}}}\right) \leq G\left( f\left( ta+\left(
1-t\right) b\right) ,f\left( tb+\left( 1-t\right) a\right) \right) .
\end{equation*}%
By integrating over $t$ on $\left[ 0,1\right] $ in the above inequality. \
The proof is completed.
\end{proof}

\begin{theorem}
Let $f:I\subset 
%TCIMACRO{\U{211d} }%
%BeginExpansion
\mathbb{R}
%EndExpansion
_{0}\rightarrow 
%TCIMACRO{\U{211d} }%
%BeginExpansion
\mathbb{R}
%EndExpansion
_{+}$\ be a $s$-logarithmically convex mapping in the first sense and
monotonic nondecreasing on $I$ with $s\in \left( 0,1\right] .$\ If $a,b\in I$
with $a<b,$ and $\frac{1}{p}+\frac{1}{q}=1$ with $p<0$ or $q<0$, then the
following inequality holds:%
\begin{eqnarray}
&&  \label{11} \\
&&\left( \int_{0}^{1}\left[ f\left( ta+\left( 1-t^{s}\right) ^{\frac{1}{s}%
}b\right) \right] ^{p}dt\right) ^{\frac{1}{p}}\left( \int_{0}^{1}\left[
f\left( ta+\left( 1-t^{s}\right) ^{\frac{1}{s}}b\right) \left(
1-t^{s}\right) ^{\frac{1}{s}-1}t^{s-1}\right] ^{q}dt\right) ^{\frac{1}{q}} 
\notag \\
&\leq &\left[ f\left( a\right) \right] \left[ f\left( b\right) \right]  
\notag
\end{eqnarray}
\end{theorem}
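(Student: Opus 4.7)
The strategy is to apply the integral \emph{reverse Hölder} inequality, which is the correct tool when $\tfrac{1}{p}+\tfrac{1}{q}=1$ with one of $p,q$ negative and the other in $(0,1)$, exactly the hypothesis here. Recall that in this regime, for any nonnegative measurable $F,G$ on $[0,1]$,
\[
\left(\int_{0}^{1}F(t)^{p}\,dt\right)^{1/p}\!\left(\int_{0}^{1}G(t)^{q}\,dt\right)^{1/q}\leq \int_{0}^{1}F(t)G(t)\,dt.
\]

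I would choose
\[
F(t)=f\bigl(ta+(1-t^{s})^{1/s}b\bigr),\qquad G(t)=f\bigl(ta+(1-t^{s})^{1/s}b\bigr)\,(1-t^{s})^{1/s-1}t^{s-1},
\]
so that $F^{p}$ matches the first integrand of \eqref{11} exactly and $G^{q}$ matches the second. Reverse Hölder then gives the intermediate bound
\[
\mathrm{LHS}\leq \int_{0}^{1}\bigl[f(ta+(1-t^{s})^{1/s}b)\bigr]^{2}(1-t^{s})^{1/s-1}t^{s-1}\,dt.
\]

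The next step is to invoke the $s$-logarithmic convexity in the first sense. Setting $\alpha=t$ and $\beta=(1-t^{s})^{1/s}$ in Definition~\ref{mm} gives $\alpha^{s}+\beta^{s}=t^{s}+(1-t^{s})=1$, hence $f\bigl(ta+(1-t^{s})^{1/s}b\bigr)\leq [f(a)]^{t^{s}}[f(b)]^{1-t^{s}}$. Squaring and substituting into the integral above gives an upper bound of the form $\int_{0}^{1}[f(a)]^{2t^{s}}[f(b)]^{2(1-t^{s})}(1-t^{s})^{1/s-1}t^{s-1}\,dt$, which I would simplify via the substitution $u=t^{s}$ (so $du=s\,t^{s-1}\,dt$ and $(1-t^{s})^{1/s-1}=(1-u)^{1/s-1}$). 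A crucial observation is that $(1-t^{s})^{1/s-1}t^{s-1}=-\tfrac{d}{dt}\bigl[(1-t^{s})^{1/s}\bigr]$, so its integral over $[0,1]$ is normalized to $1$, which should be what makes the right-hand constant come out as $f(a)f(b)$.

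The main obstacle is this final estimation: bounding $\tfrac{1}{s}\int_{0}^{1}[f(a)]^{2u}[f(b)]^{2(1-u)}(1-u)^{1/s-1}\,du$ by $f(a)f(b)$ is not routine, since the weighted geometric mean $[f(a)]^{u}[f(b)]^{1-u}$ lies between $f(a)$ and $f(b)$ but its square does not in general. This is where the monotone nondecreasing hypothesis on $f$ enters: since for $t\in[0,1]$ one has $ta+(1-t^{s})^{1/s}b\leq b$, monotonicity forces $f(ta+(1-t^{s})^{1/s}b)\leq f(b)$, so one factor of the square $[f(\cdot)]^{2}$ can be replaced by $f(b)$ while the other keeps the $s$-log convex bound $[f(a)]^{t^{s}}[f(b)]^{1-t^{s}}$; together with the normalization $\int_{0}^{1}(1-t^{s})^{1/s-1}t^{s-1}\,dt=1$ this should close out the estimate and deliver the claimed $f(a)f(b)$.
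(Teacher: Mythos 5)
Your opening move --- reverse H\"older with $F(t)=f(ta+(1-t^s)^{1/s}b)$ and $G(t)=F(t)\,(1-t^s)^{1/s-1}t^{s-1}$ --- is formally legitimate, but it commits you to proving
\[
\int_0^1\bigl[f\bigl(ta+(1-t^s)^{1/s}b\bigr)\bigr]^2\,(1-t^s)^{\frac1s-1}t^{s-1}\,dt\le f(a)f(b),
\]
and this is false in general; the monotonicity patch you propose does not rescue it. Test it at $s=1$, where the weight is identically $1$: after replacing one factor of $f(ta+(1-t)b)$ by $f(b)$ (monotonicity) and the other by $[f(a)]^{t}[f(b)]^{1-t}$ (log-convexity), the integral is bounded by $[f(b)]^2\int_0^1 c^{t}\,dt=[f(b)]^2\frac{c-1}{\ln c}$ with $c=f(a)/f(b)\le 1$, whereas the target is $f(a)f(b)=[f(b)]^2c$. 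Since $\frac{c-1}{\ln c}=L(1,c)\ge\sqrt{c}>c$ for $0<c<1$, the chain overshoots (e.g.\ $c=\tfrac12$ gives $0.721\ldots>0.5$). The failure is structural, not an artifact of loose estimates: near $t=0$ the integrand is essentially $[f(b)]^2$, which already exceeds $f(a)f(b)$ whenever $f(a)<f(b)$, and the weight assigns positive mass to that region, so no pointwise bound on $[f(\cdot)]^2$ by $f(a)f(b)$ can close the argument.

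The paper avoids this by never squaring the same value of $f$. It pairs the point $ta+(1-t^s)^{1/s}b$ with its reflection $(1-t^s)^{1/s}a+tb$; the two first-sense estimates $f(ta+(1-t^s)^{1/s}b)\le[f(a)]^{t^s}[f(b)]^{1-t^s}$ and $f((1-t^s)^{1/s}a+tb)\le[f(a)]^{1-t^s}[f(b)]^{t^s}$ have complementary exponents, so their product is exactly $f(a)f(b)$ pointwise, and integrating gives the right-hand side with no further estimation. Reverse H\"older is then applied to the pair $\bigl(f(ta+(1-t^s)^{1/s}b),\,f((1-t^s)^{1/s}a+tb)\bigr)$, and the substitution $u=(1-t^s)^{1/s}$ converts the second factor back to the original argument; that substitution is where the Jacobian $(1-t^s)^{1/s-1}t^{s-1}$ in the statement comes from. (Incidentally, the substitution actually produces $\int_0^1[f(ta+(1-t^s)^{1/s}b)]^q(1-t^s)^{\frac1s-1}t^{s-1}\,dt$, with the Jacobian to the first power rather than inside the $q$-th power as printed; your reading of the statement is literal and the mismatch is the paper's, but it is a further sign that the weight belongs to a change of variables, not to the integrand you chose for $G$.)
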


\begin{proof}
\bigskip If we choose in the Definition \ref{mm} $\alpha =t,$ $\beta =\left(
1-t^{s}\right) ^{\frac{1}{s}},$ $t\in \left[ 0,1\right] ,$ we have that $%
\alpha ^{s}+\beta ^{s}=1$ for all $t\in \left[ 0,1\right] $ and%
\begin{equation}
f\left( ta+\left( 1-t^{s}\right) ^{\frac{1}{s}}b\right) \leq \left[ f\left(
a\right) \right] ^{t^{s}}\left[ f\left( b\right) \right] ^{\left(
1-t^{s}\right) }  \label{12}
\end{equation}%
for all $t\in \left[ 0,1\right] ,$ and similarly%
\begin{equation}
f\left( \left( 1-t^{s}\right) ^{\frac{1}{s}}a+tb\right) \leq \left[ f\left(
a\right) \right] ^{\left( 1-t^{s}\right) }\left[ f\left( b\right) \right]
^{t^{s}}  \label{13}
\end{equation}%
for all $t\in \left[ 0,1\right] .$

If we multiply the above two inequalities, we have that%
\begin{equation*}
f\left( ta+\left( 1-t^{s}\right) ^{\frac{1}{s}}b\right) f\left( \left(
1-t^{s}\right) ^{\frac{1}{s}}a+tb\right) \leq \left[ f\left( a\right) \right]
\left[ f\left( b\right) \right]
\end{equation*}%
for all $t\in \left[ 0,1\right] .$ If we integrate this inequality over $t$
on $\left[ 0,1\right] $, we get that%
\begin{equation*}
\int_{0}^{1}f\left( ta+\left( 1-t^{s}\right) ^{\frac{1}{s}}b\right) f\left(
\left( 1-t^{s}\right) ^{\frac{1}{s}}a+tb\right) dt\leq \left[ f\left(
a\right) \right] \left[ f\left( b\right) \right] .
\end{equation*}%
Using H\"{o}lder inequality, we have%
\begin{eqnarray}
&&\left( \int_{0}^{1}\left[ f\left( ta+\left( 1-t^{s}\right) ^{\frac{1}{s}%
}b\right) \right] ^{p}dt\right) ^{\frac{1}{p}}\left( \int_{0}^{1}\left[
f\left( \left( 1-t^{s}\right) ^{\frac{1}{s}}a+tb\right) \right]
^{q}dt\right) ^{\frac{1}{q}}  \notag \\
&\leq &\int_{0}^{1}f\left( ta+\left( 1-t^{s}\right) ^{\frac{1}{s}}b\right)
f\left( \left( 1-t^{s}\right) ^{\frac{1}{s}}a+tb\right) dt  \label{k}
\end{eqnarray}%
Let us denote $u=\left( 1-t^{s}\right) ^{\frac{1}{s}},$ $t\in \left[ 0,1%
\right] .$ Then $t=\left( 1-u^{s}\right) ^{\frac{1}{s}}$ and $dt=-\left(
1-u^{s}\right) ^{\frac{1}{s}-1}u^{s-1}du,$ $u\in \left( 0,1\right] $ and
then we have the change of variable%
\begin{eqnarray*}
&&\int_{0}^{1}f\left( \left( 1-t^{s}\right) ^{\frac{1}{s}}a+tb\right) dt \\
&=&-\int_{1}^{0}f\left( ua+\left( 1-u^{s}\right) ^{\frac{1}{s}}b\right)
\left( 1-u^{s}\right) ^{\frac{1}{s}-1}u^{s-1}du \\
&=&\int_{0}^{1}f\left( ta+\left( 1-t^{s}\right) ^{\frac{1}{s}}b\right)
\left( 1-t^{s}\right) ^{\frac{1}{s}-1}t^{s-1}dt.
\end{eqnarray*}%
Using the inequality (\ref{k}) , we get that%
\begin{eqnarray*}
&&\left( \int_{0}^{1}\left[ f\left( ta+\left( 1-t^{s}\right) ^{\frac{1}{s}%
}b\right) \right] ^{p}dt\right) ^{\frac{1}{p}}\left( \int_{0}^{1}\left[
f\left( ta+\left( 1-t^{s}\right) ^{\frac{1}{s}}b\right) \left(
1-t^{s}\right) ^{\frac{1}{s}-1}t^{s-1}\right] ^{q}dt\right) ^{\frac{1}{q}} \\
&\leq &\left[ f\left( a\right) \right] \left[ f\left( b\right) \right]
\end{eqnarray*}%
and the proof is completed.
\end{proof}

\section{On some inequalities for $s$-logarithmically convex function in the
second sense}

\begin{theorem}
\label{t1}Let $f:I\subset 
%TCIMACRO{\U{211d} }%
%BeginExpansion
\mathbb{R}
%EndExpansion
_{0}\rightarrow 
%TCIMACRO{\U{211d} }%
%BeginExpansion
\mathbb{R}
%EndExpansion
_{+}$\ be a $s$-logarithmically convex mapping in the second sense on $I$
with $s\in \left( 0,1\right] .$\ If $a,b\in I$ with $a<b,$ then the
following inequality holds:%
\begin{equation}
\frac{1}{b-a}\int_{a}^{b}f\left( x\right) dx\leq \int_{0}^{1}\left[ f\left(
a\right) \right] ^{t^{s}}\left[ f\left( b\right) \right] ^{\left( 1-t\right)
^{s}}dt.  \label{222}
\end{equation}%
for all $t\in \left[ 0,1\right] .$\ \ \ 
\end{theorem}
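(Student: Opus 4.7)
The plan is to invoke Definition \ref{mmm} directly at the endpoints $x=a$, $y=b$ and then integrate in $t$ over $[0,1]$, converting the left-hand integral by the standard affine change of variables.

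First I would set $x=a$ and $y=b$ in the defining inequality (\ref{m2}) to obtain the pointwise bound
\begin{equation*}
f\bigl(ta+(1-t)b\bigr) \leq \bigl[f(a)\bigr]^{t^{s}} \bigl[f(b)\bigr]^{(1-t)^{s}}, \qquad t\in[0,1],
\end{equation*}
which is valid because $f$ is $s$-logarithmically convex in the second sense. Both sides are measurable (and nonnegative) in $t$, so I may integrate the inequality over $t\in[0,1]$ to get
\begin{equation*}
\int_{0}^{1} f\bigl(ta+(1-t)b\bigr)\,dt \leq \int_{0}^{1} \bigl[f(a)\bigr]^{t^{s}}\bigl[f(b)\bigr]^{(1-t)^{s}}\,dt.
\end{equation*}

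Next I would perform the substitution $u=ta+(1-t)b$ in the left-hand integral. Then $du=(a-b)\,dt$, and the limits $t=0,1$ correspond to $u=b,a$, so
\begin{equation*}
\int_{0}^{1} f\bigl(ta+(1-t)b\bigr)\,dt = \frac{1}{a-b}\int_{b}^{a} f(u)\,du = \frac{1}{b-a}\int_{a}^{b} f(u)\,du,
\end{equation*}
which is precisely the left-hand side of (\ref{222}). Combining the two displays yields the claimed inequality.

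There is essentially no obstacle here: the argument is a one-line application of the definition followed by integration and a linear change of variable, entirely parallel to the standard derivation of the right Hermite--Hadamard-type bound (\ref{1.4}) for ordinary convex functions. Note also that monotonicity of $f$ is not needed for this statement, only $s$-log-convexity in the second sense.
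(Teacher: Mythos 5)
Your argument is exactly the paper's proof: apply Definition \ref{mmm} with $x=a$, $y=b$, integrate the resulting pointwise bound over $t\in[0,1]$, and convert the left-hand side via the substitution $x=ta+(1-t)b$. The proposal is correct and follows the same route, including the (accurate) observation that no extra hypotheses beyond $s$-log-convexity in the second sense are used.
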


\begin{proof}
Since $f$ is $s$-logarithmically convex mapping in the second sense, we
have, for all $t\in \left[ 0,1\right] $%
\begin{equation*}
f\left( ta+\left( 1-t\right) b\right) \leq \left[ f\left( a\right) \right]
^{t^{s}}\left[ f\left( b\right) \right] ^{\left( 1-t\right) ^{s}}
\end{equation*}%
Integrating this inequality over $t$ on $\left[ 0,1\right] $ , we get%
\begin{equation*}
\int_{0}^{1}f\left( ta+\left( 1-t\right) b\right) dt\leq \int_{0}^{1}\left[
f\left( a\right) \right] ^{t^{s}}\left[ f\left( b\right) \right] ^{\left(
1-t\right) ^{s}}dt.
\end{equation*}%
As the change of variable $x=ta+\left( 1-t\right) b$ gives us that%
\begin{equation*}
\int_{0}^{1}f\left( ta+\left( 1-t\right) b\right) dt=\frac{1}{b-a}%
\int_{a}^{b}f\left( x\right) dx,
\end{equation*}%
The proof is completed.
\end{proof}

\begin{theorem}
\bigskip \bigskip Under the assumptions of Theorem \ref{t1}, the following
inequality holds:%
\begin{equation}
\frac{1}{b-a}\int_{a}^{b}f\left( x\right) dx\leq K\left( s,k\left( \mu
\right) \right)  \label{kk}
\end{equation}%
where%
\begin{equation*}
\mu \left( u,v\right) =\left[ f\left( a\right) \right] ^{u}\left[ f\left(
b\right) \right] ^{-v},u,v>0,
\end{equation*}%
\begin{equation*}
k\left( \mu \right) =\left\{ 
\begin{array}{c}
1,\text{ \ \ \ \ \ \ }\mu =1, \\ 
\frac{\mu -1}{\ln \mu },\text{ \ \ \ \ \ \ }\mu \neq 1,%
\end{array}%
\right.
\end{equation*}%
and%
\begin{equation*}
K\left( s,k\left( \mu \right) \right) =\left[ f\left( b\right) \right]
^{s}k\left( \mu \left( s,s\right) \right) ,\text{ \ \ \ \ }f\left( a\right)
,f\left( b\right) \leq 1.
\end{equation*}%
\ \ \ 
\end{theorem}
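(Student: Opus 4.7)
\medskip

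The plan is to start from Theorem \ref{t1}, which already gives
\[
\frac{1}{b-a}\int_{a}^{b}f(x)\,dx \;\leq\; \int_{0}^{1}[f(a)]^{t^{s}}[f(b)]^{(1-t)^{s}}\,dt,
\]
and then to bound the right-hand integral by $K(s,k(\mu))$. Unpacking the definition, one has
\[
K(s,k(\mu)) = [f(b)]^{s}\cdot\frac{[f(a)/f(b)]^{s}-1}{s\ln[f(a)/f(b)]}=\frac{[f(a)]^{s}-[f(b)]^{s}}{s(\ln f(a)-\ln f(b))}=L\!\left([f(a)]^{s},[f(b)]^{s}\right),
\]
and the same integral representation shows $L([f(a)]^{s},[f(b)]^{s})=\int_{0}^{1}[f(a)]^{st}[f(b)]^{s(1-t)}\,dt$. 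So the theorem reduces to showing
\[
\int_{0}^{1}[f(a)]^{t^{s}}[f(b)]^{(1-t)^{s}}\,dt \;\leq\; \int_{0}^{1}[f(a)]^{st}[f(b)]^{s(1-t)}\,dt.
\]

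The key elementary inequality I would establish is
\[
t^{s}\geq st \quad\text{and}\quad (1-t)^{s}\geq s(1-t)\qquad\text{for all }t\in[0,1],\ s\in(0,1].
\]
This follows by the simple chain $t^{s}\geq t\geq st$: the first step because $t\in[0,1]$ and $s\leq 1$ (equivalently, $t^{s-1}\geq 1$ on $(0,1]$), and the second because $s\leq 1$. Now the assumption $f(a),f(b)\leq 1$ is what makes the argument work: taking $\ln$, both $\ln f(a)$ and $\ln f(b)$ are nonpositive, so multiplying the (nonnegative) inequalities $t^{s}\geq st$ and $(1-t)^{s}\geq s(1-t)$ by these nonpositive quantities reverses direction, giving
\[
t^{s}\ln f(a)+(1-t)^{s}\ln f(b) \;\leq\; st\ln f(a)+s(1-t)\ln f(b),
\]
and exponentiating yields the pointwise bound
\[
[f(a)]^{t^{s}}[f(b)]^{(1-t)^{s}}\leq [f(a)]^{st}[f(b)]^{s(1-t)}
\]
for every $t\in[0,1]$. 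Integrating over $[0,1]$ gives the required inequality.

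Finally, I would compute the resulting integral explicitly: writing $[f(a)]^{st}[f(b)]^{s(1-t)}=[f(b)]^{s}\bigl(f(a)/f(b)\bigr)^{st}$ and integrating in $t$ produces $[f(b)]^{s}\cdot\frac{[f(a)/f(b)]^{s}-1}{s\ln[f(a)/f(b)]}$ when $f(a)\neq f(b)$, which is precisely $[f(b)]^{s}k(\mu(s,s))=K(s,k(\mu))$; the case $f(a)=f(b)$ handled separately (the integrand is then constant $[f(b)]^{s}$) agrees with the convention $k(1)=1$.

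The only genuinely nontrivial step is the elementary inequality $t^{s}\geq st$, which is the whole reason the hypothesis $f(a),f(b)\leq 1$ enters---without the sign control on $\ln f(a),\ln f(b)$ one cannot flip the comparison of exponents into a comparison of powers. Everything else is substitution and recognition of the logarithmic mean.
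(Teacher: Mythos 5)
Your proof is correct and follows essentially the same route as the paper: apply the bound from Theorem \ref{t1}, use the pointwise inequality $\rho^{t^{s}}\leq \rho^{st}$ for $0<\rho\leq 1$ (the paper's inequality (\ref{b})) to compare the integrands, and evaluate the resulting integral as $[f(b)]^{s}k(\mu(s,s))$. The only difference is that you actually justify the elementary inequality $t^{s}\geq st$ and the sign argument via $\ln f(a),\ln f(b)\leq 0$, which the paper asserts without proof, and you add the (correct) observation that $K(s,k(\mu))$ is the logarithmic mean $L([f(a)]^{s},[f(b)]^{s})$.
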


\begin{proof}
Since $f$ is $s$-logarithmically convex mapping in the second sense, we
have, for all $t\in \left[ 0,1\right] $%
\begin{equation*}
f\left( ta+\left( 1-t\right) b\right) \leq \left[ f\left( a\right) \right]
^{t^{s}}\left[ f\left( b\right) \right] ^{\left( 1-t\right) ^{s}}
\end{equation*}%
Integrating this inequality over $t$ on $\left[ 0,1\right] $ , we get%
\begin{equation*}
\int_{0}^{1}f\left( ta+\left( 1-t\right) b\right) dt\leq \int_{0}^{1}\left[
f\left( a\right) \right] ^{t^{s}}\left[ f\left( b\right) \right] ^{\left(
1-t\right) ^{s}}dt.
\end{equation*}%
If $0<\rho \leq 1,$ $0<t,s\leq 1,$ then%
\begin{equation}
\rho ^{t^{s}}\leq \rho ^{ts}.  \label{b}
\end{equation}%
When $f\left( a\right) ,$ $f\left( b\right) \leq 1,$ by (\ref{b}), we get
that%
\begin{eqnarray}
\int_{0}^{1}\left[ f\left( a\right) \right] ^{t^{s}}\left[ f\left( b\right) %
\right] ^{\left( 1-t\right) ^{s}}dt &\leq &\int_{0}^{1}\left[ f\left(
a\right) \right] ^{st}\left[ f\left( b\right) \right] ^{s\left( 1-t\right)
}dt  \label{k1} \\
&=&\left[ f\left( b\right) \right] ^{s}\int_{0}^{1}\left[ f\left( a\right) %
\right] ^{st}\left[ f\left( b\right) \right] ^{-st}dt  \notag \\
&=&\left[ f\left( b\right) \right] ^{s}k\left( \mu \left( s,s\right) \right)
.  \notag
\end{eqnarray}%
As the change of variable $x=ta+\left( 1-t\right) b$ gives us that%
\begin{equation}
\int_{0}^{1}f\left( ta+\left( 1-t\right) b\right) dt=\frac{1}{b-a}%
\int_{a}^{b}f\left( x\right) dx,  \label{k5}
\end{equation}%
from (\ref{k1}) to (\ref{k5}), (\ref{kk}) holds.
\end{proof}

\begin{theorem}
\label{f} \bigskip \bigskip Let $f,g:I\subset 
%TCIMACRO{\U{211d} }%
%BeginExpansion
\mathbb{R}
%EndExpansion
_{0}\rightarrow 
%TCIMACRO{\U{211d} }%
%BeginExpansion
\mathbb{R}
%EndExpansion
_{+}$\ be a $s$-logarithmically convex mappings in the second sense on $I$
with $s\in \left( 0,1\right] .$\ If $a,b\in I$ with $a<b,$ then the
following inequality holds:%
\begin{equation}
\frac{1}{b-a}\int_{a}^{b}f\left( x\right) g\left( x\right) dx\leq K\left(
s,k\left( \eta \right) \right)  \label{ff}
\end{equation}%
where%
\begin{equation*}
\eta \left( u,v\right) =\left[ f\left( a\right) g\left( a\right) \right] ^{u}%
\left[ f\left( b\right) g\left( b\right) \right] ^{-v},u,v>0,
\end{equation*}%
\begin{equation*}
k\left( \eta \right) =\left\{ 
\begin{array}{c}
1,\text{ \ \ \ \ \ \ }\eta =1, \\ 
\frac{\eta -1}{\ln \eta },\text{ \ \ \ \ \ \ }\eta \neq 1,%
\end{array}%
\right.
\end{equation*}%
and%
\begin{equation*}
K\left( s,k\left( \eta \right) \right) =\left[ f\left( b\right) g\left(
b\right) \right] ^{s}k\left( \eta \left( s,s\right) \right) ,\text{ \ \ \ \ }%
f\left( a\right) g\left( a\right) ,\text{ }f\left( b\right) g\left( b\right)
\leq 1.
\end{equation*}%
\ \ \ 
\end{theorem}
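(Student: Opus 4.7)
The plan is to follow the same template used in the preceding theorem, replacing the single function $f$ by the product $fg$. The key observation is that for $s$-logarithmically convex mappings in the second sense, the defining inequality is multiplicative in $f$, so the product $fg$ inherits a product-of-bounds estimate by multiplying the two defining inequalities pointwise in $t$.

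First I would apply Definition \ref{mmm} to both $f$ and $g$ at the point $ta+(1-t)b$, obtaining
\begin{equation*}
f\left(ta+(1-t)b\right)\leq \left[f(a)\right]^{t^{s}}\left[f(b)\right]^{(1-t)^{s}},\qquad g\left(ta+(1-t)b\right)\leq \left[g(a)\right]^{t^{s}}\left[g(b)\right]^{(1-t)^{s}},
\end{equation*}
and multiply these to get
\begin{equation*}
f\left(ta+(1-t)b\right)g\left(ta+(1-t)b\right)\leq \left[f(a)g(a)\right]^{t^{s}}\left[f(b)g(b)\right]^{(1-t)^{s}}.
\end{equation*}
Integrating over $t\in[0,1]$ and applying the substitution $x=ta+(1-t)b$ on the left-hand side converts the left-hand integral into $\frac{1}{b-a}\int_{a}^{b}f(x)g(x)\,dx$, exactly as in the change of variable $(\ref{k5})$.

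The second step is to estimate the right-hand integral under the hypothesis $f(a)g(a),f(b)g(b)\leq 1$. Here I would invoke the auxiliary inequality $\rho^{t^{s}}\leq \rho^{ts}$ for $0<\rho\leq 1$ and $0<t,s\leq 1$, stated as $(\ref{b})$, applied to both bases $f(a)g(a)$ and $f(b)g(b)$. This yields
\begin{equation*}
\int_{0}^{1}\left[f(a)g(a)\right]^{t^{s}}\left[f(b)g(b)\right]^{(1-t)^{s}}dt \leq \left[f(b)g(b)\right]^{s}\int_{0}^{1}\left[f(a)g(a)\right]^{st}\left[f(b)g(b)\right]^{-st}dt,
\end{equation*}
after factoring out $[f(b)g(b)]^{s}$. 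The remaining integrand is $\eta(s,s)^{t}$, whose integral over $[0,1]$ is precisely $k(\eta(s,s))$ by the very definition of $k$ (recall the case split $\eta=1$ versus $\eta\neq 1$). This gives the claimed bound $K(s,k(\eta))=[f(b)g(b)]^{s}k(\eta(s,s))$.

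I do not anticipate a genuine obstacle: the argument is a direct transcription of the previous theorem's proof with $fg$ in place of $f$. The only point that deserves care is keeping track of where the hypothesis $f(a)g(a),f(b)g(b)\leq 1$ is used — it is needed precisely to justify the step $\rho^{t^{s}}\leq \rho^{ts}$ for the two bases simultaneously — and verifying that the resulting exponential integral matches the definition of $k(\eta(s,s))$ in both the degenerate case $\eta=1$ and the generic case $\eta\neq 1$.
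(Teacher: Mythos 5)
Your proposal is correct and follows essentially the same route as the paper's own proof: multiply the two defining inequalities at $ta+(1-t)b$, integrate over $t\in[0,1]$ with the substitution $x=ta+(1-t)b$, apply the auxiliary estimate $\rho^{t^{s}}\leq\rho^{ts}$ to both bases under the hypothesis $f(a)g(a),f(b)g(b)\leq1$, and recognize the resulting integral of $\eta(s,s)^{t}$ as $k(\eta(s,s))$. No substantive differences.
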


\begin{proof}
Since $f,g$ are $s$-logarithmically convex mappings in the second sense, we
have, for all $t\in \left[ 0,1\right] $%
\begin{equation*}
f\left( ta+\left( 1-t\right) b\right) \leq \left[ f\left( a\right) \right]
^{t^{s}}\left[ f\left( b\right) \right] ^{\left( 1-t\right) ^{s}}\text{ and }%
g\left( ta+\left( 1-t\right) b\right) \leq \left[ g\left( a\right) \right]
^{t^{s}}\left[ g\left( b\right) \right] ^{\left( 1-t\right) ^{s}}
\end{equation*}%
from which it follows that%
\begin{eqnarray}
\int_{a}^{b}f\left( x\right) g\left( x\right) dx &=&\left( b-a\right)
\int_{0}^{1}f\left( ta+\left( 1-t\right) b\right) g\left( ta+\left(
1-t\right) b\right) dt  \label{f1} \\
&\leq &\left( b-a\right) \int_{0}^{1}\left[ f\left( a\right) g\left(
a\right) \right] ^{t^{s}}\left[ f\left( b\right) g\left( b\right) \right]
^{\left( 1-t\right) ^{s}}dt  \notag
\end{eqnarray}%
When $fg\left( a\right) ,$ $fg\left( b\right) \leq 1,$ by (\ref{b}), we get
that%
\begin{eqnarray}
\int_{0}^{1}\left[ f\left( a\right) g\left( a\right) \right] ^{t^{s}}\left[
f\left( b\right) g\left( b\right) \right] ^{\left( 1-t\right) ^{s}}dt &\leq
&\int_{0}^{1}\left[ f\left( a\right) g\left( a\right) \right] ^{st}\left[
f\left( b\right) g\left( b\right) \right] ^{s\left( 1-t\right) }dt
\label{f2} \\
&=&\left[ f\left( b\right) g\left( b\right) \right] ^{s}k\left( \eta \left(
s,s\right) \right) .  \notag
\end{eqnarray}%
from (\ref{f1}) to (\ref{f2}), (\ref{ff}) holds.
\end{proof}

\begin{corollary}
\bigskip Let $f,g:I\subset 
%TCIMACRO{\U{211d} }%
%BeginExpansion
\mathbb{R}
%EndExpansion
_{0}\rightarrow 
%TCIMACRO{\U{211d} }%
%BeginExpansion
\mathbb{R}
%EndExpansion
_{+}$\ be a log-convex mappings on $I$.\ If $a,b\in I$ with $a<b,$ then the
following inequality holds:%
\begin{equation*}
\frac{1}{b-a}\int_{a}^{b}f\left( x\right) g\left( x\right) dx\leq L\left(
f\left( a\right) g\left( a\right) ,\text{ }f\left( b\right) g\left( b\right)
\right)
\end{equation*}%
where $L(p,q):=\frac{p\text{-}q}{\ln p-\ln q}$ $(p\neq q)$ is the
logarithmic mean of the positive real numbers.
\end{corollary}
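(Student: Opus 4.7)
The plan is to specialize Theorem \ref{f} to $s=1$. Putting $s=1$ in Definition \ref{mmm}, the exponents $t^{1}$ and $(1-t)^{1}$ reduce to $t$ and $1-t$, so inequality (\ref{m2}) becomes precisely the defining inequality (\ref{1.2}) of ordinary log-convexity. Hence any log-convex mappings $f$ and $g$ satisfy the hypotheses of Theorem \ref{f} with $s=1$.

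Next I would evaluate $K(1,k(\eta))$ explicitly. With $s=1$, the quantity $\eta(1,1)$ becomes $\frac{f(a)g(a)}{f(b)g(b)}$, and when this differs from $1$ we have $k(\eta)=\frac{\eta-1}{\ln\eta}$, so
\[
K(1,k(\eta)) = [f(b)g(b)]\cdot\frac{\frac{f(a)g(a)}{f(b)g(b)}-1}{\ln\frac{f(a)g(a)}{f(b)g(b)}} = \frac{f(a)g(a)-f(b)g(b)}{\ln(f(a)g(a))-\ln(f(b)g(b))},
\]
which is $L(f(a)g(a),f(b)g(b))$ by the definition of the logarithmic mean. The degenerate case $f(a)g(a)=f(b)g(b)$ is handled by the convention $L(p,p)=p$, so the identification holds in all cases.

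The only subtle point, and the main obstacle, is that the corollary does not impose the restriction $f(a)g(a),f(b)g(b)\leq 1$ that appears in Theorem \ref{f}. That constraint entered only through inequality (\ref{b}), $\rho^{t^{s}}\leq\rho^{ts}$ for $0<\rho\leq 1$, which was used to pass from the $t^{s}$ exponents to the $ts$ exponents in (\ref{f2}). At $s=1$ this step is a trivial equality valid for every positive $\rho$, so the boundedness hypothesis collapses. I would therefore briefly reproduce the relevant line of Theorem \ref{f} with $s=1$, observing that (\ref{f2}) becomes an identity and the restriction drops out, then combine with the change of variable $x=ta+(1-t)b$ exactly as in the proof of Theorem \ref{f} to conclude.
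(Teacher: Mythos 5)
Your proof follows the paper's own route exactly: the paper's entire proof is the one-line remark that one takes $s=1$ in (\ref{ff}). Your write-up is in fact more careful than the paper's, since you explicitly compute that $K(1,k(\eta))$ equals $L\left(f(a)g(a),f(b)g(b)\right)$ and you justify dropping the restriction $f(a)g(a),f(b)g(b)\leq 1$ from Theorem \ref{f} by noting that (\ref{b}) degenerates to an equality valid for every positive $\rho$ when $s=1$ --- a point the paper passes over in silence.
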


\begin{proof}
\bigskip We take $s=1$ in (\ref{ff}), we get the required result.
\end{proof}

\begin{theorem}
\bigskip \bigskip Let $f,g,a,b$\ be as in Theorem \ref{f} and $\alpha ,\beta
>0$ with $\alpha +\beta =1.$Then the following inequality holds:%
\begin{equation}
\frac{1}{b-a}\int_{a}^{b}f\left( x\right) g\left( x\right) dx\leq K\left(
s,\alpha ,\beta ;k\left( \omega \right) ,k\left( \ell \right) \right)
\label{oo}
\end{equation}%
where%
\begin{equation*}
\omega \left( u,v\right) =\left[ f\left( a\right) \right] ^{u}\left[ f\left(
b\right) \right] ^{-v},\text{ \ and \ }\ell \left( u,v\right) =\left[
g\left( a\right) \right] ^{u}\left[ g\left( b\right) \right] ^{-v},\text{\ }%
u,v>0,
\end{equation*}%
\begin{equation*}
k\left( \omega \right) =\left\{ 
\begin{array}{c}
1,\text{ \ \ \ \ \ \ }\omega =1, \\ 
\frac{\omega -1}{\ln \omega },\text{ \ \ \ \ \ \ }\omega \neq 1,%
\end{array}%
\right. ,\text{ }k\left( \ell \right) =\left\{ 
\begin{array}{c}
1,\text{ \ \ \ \ \ \ }\ell =1, \\ 
\frac{\ell -1}{\ln \ell },\text{ \ \ \ \ \ \ }\ell \neq 1,%
\end{array}%
\right.
\end{equation*}%
and%
\begin{eqnarray*}
&&K\left( s,\alpha ,\beta ;k\left( \omega \right) ,k\left( \ell \right)
\right) \\
&=&\alpha \left[ f\left( b\right) \right] ^{\frac{s}{\alpha }}k\left( \omega
\left( \frac{s}{\alpha },\frac{s}{\alpha }\right) \right) +\beta \left[
g\left( b\right) \right] ^{\frac{s}{\beta }}k\left( \ell \left( \frac{s}{%
\beta },\frac{s}{\beta }\right) \right) ,\text{ }f\left( a\right) ,g\left(
a\right) ,f\left( b\right) ,g\left( b\right) \leq 1.
\end{eqnarray*}%
\ \ \ 
\end{theorem}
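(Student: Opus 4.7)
The plan is to imitate the proof of Theorem~\ref{f} (the $K(s,k(\eta))$ bound) but decouple $f$ and $g$ at the outset with a Young-type splitting parameterized by $\alpha,\beta$. Concretely, I would proceed as follows.

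First, I would apply the $s$-logarithmic convexity in the second sense to $f$ and $g$ separately on the point $ta+(1-t)b$, obtaining
\begin{equation*}
f\left( ta+(1-t)b\right) g\left( ta+(1-t)b\right) \leq \left[ f(a)\right]^{t^{s}}\left[ f(b)\right]^{(1-t)^{s}}\cdot \left[ g(a)\right]^{t^{s}}\left[ g(b)\right]^{(1-t)^{s}}.
\end{equation*}
At this point I would apply Young's inequality $XY\leq \alpha X^{1/\alpha}+\beta Y^{1/\beta}$ (valid because $\alpha,\beta>0$ with $\alpha+\beta=1$) to the right-hand side with $X=[f(a)]^{t^{s}}[f(b)]^{(1-t)^{s}}$ and $Y=[g(a)]^{t^{s}}[g(b)]^{(1-t)^{s}}$. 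This produces
\begin{equation*}
f\left( ta+(1-t)b\right) g\left( ta+(1-t)b\right) \leq \alpha \left[ f(a)\right]^{t^{s}/\alpha}\left[ f(b)\right]^{(1-t)^{s}/\alpha}+\beta \left[ g(a)\right]^{t^{s}/\beta}\left[ g(b)\right]^{(1-t)^{s}/\beta},
\end{equation*}
which is exactly the step that introduces the parameters $\alpha,\beta$ into the final bound.

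Next, I would integrate both sides over $t\in[0,1]$. On the left, the change of variable $x=ta+(1-t)b$ converts the integral into $\frac{1}{b-a}\int_{a}^{b}f(x)g(x)\,dx$. On the right, I would bound each of the two integrals using inequality (\ref{b}): since $f(a),f(b),g(a),g(b)\leq 1$ (so their positive powers are in $(0,1]$), we have $\rho^{t^{s}/\alpha}\leq \rho^{ts/\alpha}$ and analogously for the other base, and similarly for $g$. This reduces each integrand to one of the form $\rho_{1}^{ts/\alpha}\rho_{2}^{s(1-t)/\alpha}$, which factors as $\rho_{2}^{s/\alpha}\cdot (\rho_{1}/\rho_{2})^{ts/\alpha}$. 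The elementary identity $\int_{0}^{1}r^{t}\,dt=k(r)$ then collapses the $f$-integral to $[f(b)]^{s/\alpha}k\bigl(\omega(s/\alpha,s/\alpha)\bigr)$ and the $g$-integral to $[g(b)]^{s/\beta}k\bigl(\ell(s/\beta,s/\beta)\bigr)$.

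Putting everything together yields precisely the bound $K(s,\alpha,\beta;k(\omega),k(\ell))$. The main obstacle is purely bookkeeping rather than conceptual: one has to keep careful track of where the exponents $1/\alpha$ and $1/\beta$ are absorbed so that after applying (\ref{b}) and the $r^{t}$ identity, the arguments of $k(\cdot)$ come out as $\omega(s/\alpha,s/\alpha)$ and $\ell(s/\beta,s/\beta)$ rather than any other combination of the defining parameters $u,v$. Apart from this, the proof is a direct combination of Young's inequality, the monotonicity estimate (\ref{b}), and the same change of variable used in Theorem~\ref{t1} and Theorem~\ref{f}.
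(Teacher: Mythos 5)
Your proposal is correct and follows essentially the same route as the paper: Young's inequality $mn\leq \alpha m^{1/\alpha}+\beta n^{1/\beta}$ to split $f$ from $g$, the $s$-logarithmic convexity bound, the estimate (\ref{b}), and the identity $\int_{0}^{1}r^{t}\,dt=k(r)$. The only (immaterial) difference is that the paper applies Young's inequality to the function values $f(ta+(1-t)b)$ and $g(ta+(1-t)b)$ first and then invokes $s$-logarithmic convexity on each term, whereas you apply the convexity bounds first and then Young's inequality to the resulting products; both orders produce the identical integrand $\alpha\left[ f(a)\right]^{t^{s}/\alpha}\left[ f(b)\right]^{(1-t)^{s}/\alpha}+\beta\left[ g(a)\right]^{t^{s}/\beta}\left[ g(b)\right]^{(1-t)^{s}/\beta}$.
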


\begin{proof}
Since $f,g$ are $s$-logarithmically convex mappings in the second sense, we
have, for all $t\in \left[ 0,1\right] $%
\begin{equation}
f\left( ta+\left( 1-t\right) b\right) \leq \left[ f\left( a\right) \right]
^{t^{s}}\left[ f\left( b\right) \right] ^{\left( 1-t\right) ^{s}}\text{ and }%
g\left( ta+\left( 1-t\right) b\right) \leq \left[ g\left( a\right) \right]
^{t^{s}}\left[ g\left( b\right) \right] ^{\left( 1-t\right) ^{s}}  \label{o0}
\end{equation}%
from which it follows that%
\begin{equation}
\int_{a}^{b}f\left( x\right) g\left( x\right) dx=\left( b-a\right)
\int_{0}^{1}f\left( ta+\left( 1-t\right) b\right) g\left( ta+\left(
1-t\right) b\right) dt  \label{o1}
\end{equation}%
Using the well known inequality $mn\leq \alpha m^{\frac{1}{\alpha }}+\beta
n^{\frac{1}{\beta }},$ (\ref{o0}), on the right side of (\ref{o1}), we have 
\begin{eqnarray*}
\frac{1}{b-a}\int_{a}^{b}f\left( x\right) g\left( x\right) dx &\leq
&\int_{0}^{1}\left\{ \alpha \left[ f\left( ta+\left( 1-t\right) b\right) %
\right] ^{\frac{1}{\alpha }}+\beta \left[ g\left( ta+\left( 1-t\right)
b\right) \right] ^{\frac{1}{\beta }}\right\} dt \\
&\leq &\int_{0}^{1}\left\{ \alpha \left[ \left[ f\left( a\right) \right]
^{t^{s}}\left[ f\left( b\right) \right] ^{\left( 1-t\right) ^{s}}\right] ^{%
\frac{1}{\alpha }}+\beta \left[ \left[ g\left( a\right) \right] ^{t^{s}}%
\left[ g\left( b\right) \right] ^{\left( 1-t\right) ^{s}}\right] ^{\frac{1}{%
\beta }}\right\} dt \\
&=&\alpha \int_{0}^{1}\left[ f\left( a\right) \right] ^{\frac{t^{s}}{\alpha }%
}\left[ f\left( b\right) \right] ^{\frac{\left( 1-t\right) ^{s}}{\alpha }%
}dt+\beta \int_{0}^{1}\left[ g\left( a\right) \right] ^{\frac{t^{s}}{\beta }}%
\left[ g\left( b\right) \right] ^{\frac{\left( 1-t\right) ^{s}}{\beta }}dt
\end{eqnarray*}%
When $f\left( a\right) ,$ $g\left( a\right) ,$ $f\left( b\right) ,$ $g\left(
b\right) \leq 1,$ by (\ref{b}), we get that%
\begin{eqnarray}
\alpha \int_{0}^{1}\left[ f\left( a\right) \right] ^{\frac{t^{s}}{\alpha }}%
\left[ f\left( b\right) \right] ^{\frac{\left( 1-t\right) ^{s}}{\alpha }}dt
&\leq &\alpha \int_{0}^{1}\left[ f\left( a\right) \right] ^{\frac{st}{\alpha 
}}\left[ f\left( b\right) \right] ^{\frac{s\left( 1-t\right) }{\alpha }}dt 
\notag \\
&=&\alpha \left[ f\left( b\right) \right] ^{\frac{s}{\alpha }}k\left( \omega
\left( \frac{s}{\alpha },\frac{s}{\alpha }\right) \right) .  \notag \\
\beta \int_{0}^{1}\left[ g\left( a\right) \right] ^{\frac{t^{s}}{\beta }}%
\left[ g\left( b\right) \right] ^{\frac{\left( 1-t\right) ^{s}}{\beta }}dt
&\leq &\beta \int_{0}^{1}\left[ g\left( a\right) \right] ^{\frac{st}{\beta }}%
\left[ g\left( b\right) \right] ^{\frac{s\left( 1-t\right) }{\beta }}dt 
\notag \\
&=&\beta \left[ g\left( b\right) \right] ^{\frac{s}{\beta }}k\left( \ell
\left( \frac{s}{\beta },\frac{s}{\beta }\right) \right) .  \label{o2}
\end{eqnarray}%
From (\ref{o1}) to (\ref{o2}), (\ref{oo}) holds.
\end{proof}

\begin{corollary}
\bigskip Let $f,g:I\subset 
%TCIMACRO{\U{211d} }%
%BeginExpansion
\mathbb{R}
%EndExpansion
_{0}\rightarrow 
%TCIMACRO{\U{211d} }%
%BeginExpansion
\mathbb{R}
%EndExpansion
_{+}$\ be a log-convex mappings on $I$.\ If $a,b\in I$ with $a<b,$ then the
following inequality holds:%
\begin{equation*}
\frac{1}{b-a}\int_{a}^{b}f\left( x\right) g\left( x\right) dx\leq \alpha
\times L\left( \left[ f\left( a\right) \right] ^{\frac{1}{\alpha }},\left[
f\left( b\right) \right] ^{\frac{1}{\alpha }}\right) +\beta \times L\left( %
\left[ g\left( a\right) \right] ^{\frac{1}{\beta }},\left[ g\left( b\right) %
\right] ^{\frac{1}{\beta }}\right)
\end{equation*}%
where $L(p,q):=\frac{p\text{-}q}{\ln p-\ln q}$ $(p\neq q)$ is the
logarithmic mean of the positive real numbers.
\end{corollary}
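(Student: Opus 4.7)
The plan is to specialize the preceding theorem (inequality \eqref{oo}) to the case $s=1$. By the remark immediately following Definition \ref{mmm}, a function is $1$-logarithmically convex in the second sense precisely when it is ordinary log-convex. Hence any log-convex $f,g$ automatically satisfy the hypotheses of the theorem (with $s=1$), and we may quote inequality \eqref{oo} directly.

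The remaining task is to recognize that, with $s=1$, the constant $K(s,\alpha,\beta;k(\omega),k(\ell))$ collapses to the logarithmic-mean expression appearing in the corollary. Concretely, for the first summand we have $\omega(1/\alpha,1/\alpha)=[f(a)/f(b)]^{1/\alpha}$, so by the definition of $k(\omega)$,
\[
\alpha\,[f(b)]^{1/\alpha}\,k\!\left(\omega(1/\alpha,1/\alpha)\right)=\alpha\,[f(b)]^{1/\alpha}\cdot\frac{[f(a)/f(b)]^{1/\alpha}-1}{(1/\alpha)(\ln f(a)-\ln f(b))}=\alpha\cdot\frac{[f(a)]^{1/\alpha}-[f(b)]^{1/\alpha}}{\ln[f(a)]^{1/\alpha}-\ln[f(b)]^{1/\alpha}},
\]
which is exactly $\alpha\cdot L\!\left([f(a)]^{1/\alpha},[f(b)]^{1/\alpha}\right)$. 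An identical manipulation on the $\beta$-term converts it into $\beta\cdot L\!\left([g(a)]^{1/\beta},[g(b)]^{1/\beta}\right)$, yielding the stated right-hand side. The degenerate case $f(a)=f(b)$ or $g(a)=g(b)$ is handled by the $\mu=1$ branch in the definition of $k$, which agrees with the convention $L(p,p)=p$.

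The only subtle point, and the one worth flagging as a potential obstacle, is the auxiliary hypothesis $f(a),g(a),f(b),g(b)\le 1$ that appears in the parent theorem but is \emph{absent} from the corollary. Tracing through the proof of that theorem, this assumption is used only once, via inequality \eqref{b}, to pass from $\rho^{t^{s}}$ to $\rho^{ts}$. When $s=1$ these two quantities coincide, so \eqref{b} is an equality and the assumption is vacuous. Consequently one can drop this hypothesis in the $s=1$ specialization, which is precisely why the corollary is stated without it, and the proof reduces to the single line ``set $s=1$ in \eqref{oo}.''
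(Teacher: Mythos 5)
Your proposal is correct and follows exactly the paper's route: the paper's entire proof is ``take $s=1$ in (\ref{oo}),'' and your computation simply makes explicit the collapse of $K\left( s,\alpha ,\beta ;k\left( \omega \right) ,k\left( \ell \right) \right)$ to the two logarithmic-mean terms. Your additional observation that the hypothesis $f(a),g(a),f(b),g(b)\leq 1$ becomes vacuous at $s=1$ (since (\ref{b}) is then an equality) is a genuine point the paper silently skips, and it is needed for the corollary to hold as stated without that restriction.
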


\begin{proof}
\bigskip We take $s=1$ in (\ref{oo}), we get the required result.
\end{proof}

\begin{theorem}
\label{t40}\bigskip \bigskip Let $f,g,a,b$\ be as in Theorem \ref{f} and $%
p,q>1$ with $\frac{1}{p}+\frac{1}{q}=1.$Then the following inequality holds: 
\begin{equation}
\frac{1}{b-a}\int_{a}^{b}f\left( x\right) g\left( x\right) dx\leq K\left(
s,p,q;k\left( \omega \right) ,k\left( \ell \right) \right)   \label{40}
\end{equation}%
where $\omega \left( u,v\right) $, $\ell \left( u,v\right) ,$ $k\left(
\omega \right) ,$ $k\left( \ell \right) $ is defined as above and%
\begin{eqnarray*}
&&K\left( s,p,q;k\left( \omega \right) ,k\left( \ell \right) \right)  \\
&=&\left[ f\left( b\right) g\left( b\right) \right] ^{s}\left( k\left(
\omega \left( sp,sp\right) \right) \right) ^{\frac{1}{p}}\left( k\left( \ell
\left( sq,sq\right) \right) \right) ^{\frac{1}{q}},\text{ }f\left( a\right)
,g\left( a\right) ,f\left( b\right) ,g\left( b\right) \leq 1.
\end{eqnarray*}%
\ \ \ 
\end{theorem}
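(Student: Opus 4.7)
The plan is to mirror the structure of the proofs of the preceding two theorems (the one giving bound $K(s,k(\eta))$ and the one giving $K(s,\alpha,\beta;k(\omega),k(\ell))$), but to split the product $fg$ via H\"{o}lder's inequality with conjugate exponents $p,q$ instead of via the Young inequality $mn\leq \alpha m^{1/\alpha}+\beta n^{1/\beta}$. So my first step is to perform the substitution $x=ta+(1-t)b$ to rewrite
\[
\frac{1}{b-a}\int_a^b f(x)g(x)\,dx=\int_0^1 f(ta+(1-t)b)\,g(ta+(1-t)b)\,dt,
\]
and then immediately apply H\"{o}lder to separate $f$ from $g$:
\[
\int_0^1 f\,g\,dt\leq \left(\int_0^1 [f(ta+(1-t)b)]^p\,dt\right)^{1/p}\left(\int_0^1 [g(ta+(1-t)b)]^q\,dt\right)^{1/q}.
\]

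Next I would invoke the $s$-log-convexity (in the second sense) of $f$ and $g$ to bound each factor by
\[
\int_0^1 [f(a)]^{pt^s}[f(b)]^{p(1-t)^s}\,dt\quad\text{and}\quad \int_0^1 [g(a)]^{qt^s}[g(b)]^{q(1-t)^s}\,dt,
\]
respectively. Then, under the standing hypothesis $f(a),f(b),g(a),g(b)\leq 1$, I would apply inequality (\ref{b}) (i.e.\ $\rho^{t^s}\leq \rho^{ts}$ when $0<\rho\leq 1$) to replace the exponents $t^s$ and $(1-t)^s$ by the linear expressions $ts$ and $s(1-t)$. After factoring $[f(b)]^{ps}$ and $[g(b)]^{qs}$ out of their respective integrals, each integrand takes the form $\mu^{\,\text{const}\cdot t}$ with $\mu=\omega(sp,sp)$ or $\ell(sq,sq)$, which integrates exactly to the quantity $k(\mu)$ defined via the logarithmic mean.

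Putting the two factors back together, the $p$-th and $q$-th roots of $[f(b)]^{ps}$ and $[g(b)]^{qs}$ combine to the single prefactor $[f(b)g(b)]^{s}$, yielding
\[
\frac{1}{b-a}\int_a^b f(x)g(x)\,dx\leq [f(b)g(b)]^{s}\bigl(k(\omega(sp,sp))\bigr)^{1/p}\bigl(k(\ell(sq,sq))\bigr)^{1/q},
\]
which is exactly $K(s,p,q;k(\omega),k(\ell))$. The only genuinely delicate point is the bookkeeping at the final step, namely checking that the exponents line up so that $[f(b)]^{ps/p}[g(b)]^{qs/q}=[f(b)g(b)]^s$; once this is verified the rest is the same template used in Theorem \ref{f} and its companion, so I do not anticipate any real obstacle beyond writing the algebra cleanly.
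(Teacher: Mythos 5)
Your proposal is correct and follows essentially the same route as the paper's own proof: the change of variable $x=ta+(1-t)b$, H\"{o}lder's inequality with exponents $p,q$ to separate $f$ from $g$, the $s$-log-convexity bound on each factor, inequality (\ref{b}) under the hypothesis $f(a),g(a),f(b),g(b)\leq 1$, and the exact integration yielding $k(\omega(sp,sp))$ and $k(\ell(sq,sq))$. The exponent bookkeeping $[f(b)]^{sp/p}[g(b)]^{sq/q}=[f(b)g(b)]^{s}$ checks out, so there is nothing to add.
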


\begin{proof}
Since $f$ and $g$ are positive funtions and using the well known H\"{o}lder
inequality on the right side of (\ref{o1}), we have 
\begin{eqnarray}
&&\frac{1}{b-a}\int_{a}^{b}f\left( x\right) g\left( x\right) dx  \label{41}
\\
&\leq &\left( \int_{0}^{1}\left[ f\left( ta+\left( 1-t\right) b\right) %
\right] ^{p}dt\right) ^{\frac{1}{p}}\left( \int_{0}^{1}\left[ g\left(
ta+\left( 1-t\right) b\right) \right] ^{q}dt\right) ^{\frac{1}{q}}  \notag \\
&\leq &\left( \int_{0}^{1}\left[ \left[ f\left( a\right) \right] ^{pt^{s}}%
\left[ f\left( b\right) \right] ^{p\left( 1-t\right) ^{s}}\right] dt\right)
^{\frac{1}{p}}\left( \int_{0}^{1}\left[ \left[ g\left( a\right) \right]
^{qt^{s}}\left[ g\left( b\right) \right] ^{q\left( 1-t\right) ^{s}}\right]
dt\right) ^{\frac{1}{q}}  \notag
\end{eqnarray}%
When $0<f\left( a\right) ,$ $g\left( a\right) ,$ $f\left( b\right) ,$ $%
g\left( b\right) \leq 1,$ by (\ref{b}), we get that%
\begin{eqnarray}
\int_{0}^{1}\left[ \left[ f\left( a\right) \right] ^{pt^{s}}\left[ f\left(
b\right) \right] ^{p\left( 1-t\right) ^{s}}\right] dt &\leq &\int_{0}^{1}%
\left[ f\left( a\right) \right] ^{spt}\left[ f\left( b\right) \right]
^{sp\left( 1-t\right) }dt  \notag \\
&=&\left[ f\left( b\right) \right] ^{sp}k\left( \omega \left( sp,sp\right)
\right) .  \notag \\
\int_{0}^{1}\left[ \left[ g\left( a\right) \right] ^{qt^{s}}\left[ g\left(
b\right) \right] ^{q\left( 1-t\right) ^{s}}\right] dt &\leq &\int_{0}^{1}%
\left[ g\left( a\right) \right] ^{sqt}\left[ g\left( b\right) \right]
^{sq\left( 1-t\right) }dt  \notag \\
&=&\left[ g\left( b\right) \right] ^{sq}k\left( \ell \left( sq,sq\right)
\right) .  \label{42}
\end{eqnarray}%
Therefore,%
\begin{eqnarray*}
&&\frac{1}{b-a}\int_{a}^{b}f\left( x\right) g\left( x\right) dx \\
&\leq &\left[ f\left( b\right) g\left( b\right) \right] ^{s}\left( k\left(
\omega \left( sp,sp\right) \right) \right) ^{\frac{1}{p}}\left( k\left( \ell
\left( sq,sq\right) \right) \right) ^{\frac{1}{q}}.
\end{eqnarray*}%
Thus, Theorem \ref{t40} is proved.
\end{proof}

\begin{corollary}
\bigskip Let $f,g:I\subset 
%TCIMACRO{\U{211d} }%
%BeginExpansion
\mathbb{R}
%EndExpansion
_{0}\rightarrow 
%TCIMACRO{\U{211d} }%
%BeginExpansion
\mathbb{R}
%EndExpansion
_{+}$\ be a log-convex mappings on $I$.\ If $a,b\in I$ with $a<b,$ then the
following inequality holds:%
\begin{equation*}
\frac{1}{b-a}\int_{a}^{b}f\left( x\right) g\left( x\right) dx\leq \left(
L\left( \left[ f\left( a\right) \right] ^{p},\left[ f\left( b\right) \right]
^{p}\right) \right) ^{\frac{1}{p}}\left( L\left( \left[ g\left( a\right) %
\right] ^{q},\left[ g\left( b\right) \right] ^{q}\right) \right) ^{\frac{1}{q%
}}
\end{equation*}%
where $L(p,q):=\frac{p\text{-}q}{\ln p-\ln q}$ $(p\neq q)$ is the
logarithmic mean of the positive real numbers.
\end{corollary}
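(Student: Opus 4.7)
The plan is to derive this corollary as the $s = 1$ specialization of Theorem~\ref{t40}. Setting $s = 1$ in Definition~\ref{mmm} recovers exactly the ordinary log-convexity condition, so any positive log-convex $f$ and $g$ are $s$-logarithmically convex in the second sense with $s = 1$, meeting the hypotheses of Theorem~\ref{t40}.

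First, I would address an apparent gap in hypotheses. Theorem~\ref{t40} is stated under the auxiliary restriction $f(a), f(b), g(a), g(b) \leq 1$, which is absent from the corollary. Tracing through the proof of Theorem~\ref{t40}, one sees that this restriction is invoked only to apply inequality~(\ref{b}), namely $\rho^{t^{s}} \leq \rho^{ts}$ for $0 < \rho \leq 1$ and $0 < s, t \leq 1$. In the case $s = 1$ this collapses to the identity $\rho^{t} = \rho^{t}$, so the upper-bound hypothesis is vacuous and the conclusion of Theorem~\ref{t40} is available unconditionally.

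Next, I would substitute $s = 1$ in the bound provided by Theorem~\ref{t40}, yielding
\[
\frac{1}{b-a}\int_{a}^{b} f(x)g(x)\,dx \leq f(b)g(b)\bigl(k(\omega(p,p))\bigr)^{1/p}\bigl(k(\ell(q,q))\bigr)^{1/q},
\]
and then unpack each $k$-factor into a logarithmic mean. With $\omega(p,p) = [f(a)]^{p}[f(b)]^{-p}$ and the explicit formula for $k$, a direct computation gives
\[
[f(b)]^{p}\, k(\omega(p,p)) = \frac{[f(a)]^{p} - [f(b)]^{p}}{\ln [f(a)]^{p} - \ln [f(b)]^{p}} = L\bigl([f(a)]^{p}, [f(b)]^{p}\bigr),
\]
and analogously $[g(b)]^{q}\, k(\ell(q,q)) = L([g(a)]^{q}, [g(b)]^{q})$. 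Taking $1/p$ and $1/q$ roots and multiplying reproduces exactly the right-hand side of the corollary; the degenerate cases $f(a) = f(b)$ or $g(a) = g(b)$ are absorbed by the conventions $L(p,p) = p$ and $k(1) = 1$.

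The main obstacle is the first step: justifying that the upper-bound hypothesis on $f$ and $g$ in Theorem~\ref{t40} can legitimately be dropped in the corollary. Once that is clarified, the remainder is just the algebraic identity converting $[f(b)]^{p} k(\omega(p,p))$ into $L([f(a)]^{p}, [f(b)]^{p})$, which is routine.
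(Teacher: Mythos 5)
Your proposal is correct and follows the same route as the paper, whose entire proof is the single line ``take $s=1$ in (\ref{40})''. You go further than the paper by explicitly verifying that the auxiliary hypothesis $f(a),f(b),g(a),g(b)\leq 1$ of Theorem \ref{t40} becomes vacuous at $s=1$ (since (\ref{b}) degenerates to an identity) and by carrying out the algebra identifying $[f(b)]^{p}k(\omega(p,p))$ with $L([f(a)]^{p},[f(b)]^{p})$ --- both of which the paper leaves unstated.
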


\begin{proof}
\bigskip We take $s=1$ in (\ref{40}), we get the required result.
\end{proof}

\bigskip

\bigskip

\bigskip


\begin{thebibliography}{99}
\bibitem{hdm} J. Hadamard: \'{E}tude sur les propri\'{e}t\'{e}s des
fonctions enti\`{e}res et en particulier d'une fonction consider\'{e}e par
Riemann, J. Math Pures Appl., 58, (1893) 171--215.

\bibitem{orl} \bigskip W. Orlicz, A note on modular spaces I, Bull. Acad.
Polon Sci. Ser. Math. Astronom. Phys., 9 (1961), 157-162.

\bibitem{bai} R.-F. Bai, F. Qi and B.-Y. Xi, and : Hermite-Hadamard type
inequalities for the $m$- and $\left( \alpha ,m\right) $-logarithmically
convex functions. Filomat 27 (2013), 1-7.

\bibitem{hud} H. Hudzik and L. Maligranda: Some remarks on $s$-convex
functions,\ Aequationes Math., Vol. 48 (1994), 100--111.

\bibitem{mma} M. E. \"{O}zdemir, M. Tun\c{c}, and A. O. Akdemir: On $%
(h-s)_{1,2}$-convex functions and Hadamard-type inequalities,
http://arxiv.org/abs/1201.6138 (submitted)

\bibitem{tnc2} M. Tun\c{c}: On some new inequalities for convex fonctions,
Turk.J.Math. 36 (2012), 245-251.

\bibitem{8} Dragomir, S.S. and Mond, B.: Integral inequalities of Hadamard
type for log-convex functions, Demonstratio Mathematica, 31 (2) (1998),
354-364.

\bibitem{6} Dragomir, S.S.: Two functions in connection to Hadamard's
inequalities, J. Math. Anal.Appl., 167 (1992), 49-56.

\bibitem{7} Dragomir, S.S.: Some remarks on Hadamard's inequalities for
convex functions, Extracta Math., 9(2) (1994), 88-94.

\bibitem{9} Dragomir, S.S.: Refinements of the Hermite-Hadamard integral
inequality for log-convex functions, RGMIA Research Report Collection, 3, 4,
(2000), 527-533.

\bibitem{5} Dragomir, S.S., Pe\v{c}ari\'{c}, J.E. and Sandor, J.:\ A note on
the Jensen-Hadamard inequality, Anal. Num. Theor. Approx., 19 (1990), 29-34.

\bibitem{10} Pachpatte, B.G.:\ A note on integral inequalities involving two
log-convex functions, Math.Ineq.\&Appl. 7, 4, (2004), 511-515.

\bibitem{2} J. E. Pe\v{c}ari\'{c}, F. Proschan and Y. L. Tong: Convex
Functions, Partial Orderings, and Statistical Applications, Academic Press
Inc., 1992.

\bibitem{3} Mitrinovi\'{c}, D.S., Pe\v{c}ari\'{c}, J.E. and Fink, A.M.:
Classical and New Inequalities in Analysis, Kluwer Academic Publishers,
Dordrecht, 1993.

\bibitem{mit} D. S. Mitrinovi\'{c}, J. Pe\v{c}ari\'{c} and A. M. Fink:\
Classical and new inequalities in analysis, KluwerAcademic, Dordrecht, 1993.

\bibitem{dr2} S. S. Dragomir and C. E. M. Pearce: Selected topics on
Hermite-Hadamard inequalities and applications, RGMIA monographs, Victoria
University, 2000. [Online:\
http://www.staff.vu.edu.au/RGMIA/monographs/hermite-hadamard.html].
\end{thebibliography}
\end{document}